\theoremstyle{theorem}
\newtheorem{theorem}{Theorem}[section]
\newtheorem{corollary}[theorem]{Corollary}
\newtheorem{proposition}[theorem]{Proposition}
\newtheorem{definition}{Definition}[section]
\newtheorem{remark}{Remark}[section]
\numberwithin{equation}{section}
\title{The pasch configuration and Steiner triple systems}
\author{Masood Aryapoor\footnote{E-mail: aryapoor2002@yahoo.com}}
\date{}
\begin{document}
\maketitle
   
\begin{section}{Introduction}
A \textbf{Steiner triple system} of order $n$ (or $STS(n)$) is a pair $(X,S)$ where $X$, called the foundation, is a finite set with $n$ elements and $S$ is a subset of $\mathcal{P}_3(X)$ (the set of all 3-subsets of $X$) whose elements are called blocks, such that 
for every pair of distinct elements of $X$ there is exactly one block in $S$ which contains both of them. It is known that an $STS(n)$ exists iff $n\equiv 1,3\mod{6} $. Such values of $n$ are called admissible. For a detailed study of them and related objects see \cite{STB}. 

Another useful way to represent Steiner triple systems is to use the notion of Steiner quasi-groups. A \textbf{Steiner quasi-group} structure on $X$ is a binary operation $\star$ on $X$ subject to the following conditions:
for every $a,b\in X$, we have  $a\star a=a$,  $a\star b=b\star a$, and  $a\star(a\star b)=b$. It is known that there is a one-to-one correspondence between Steiner triple systems with foundation $X$ and Steiner quasi-group structures on $X$.
The correspondence is given by sending a Steiner triple system $S$ to the binary operation $\star$ where $a\star b=c$ iff $\{a,b,c\}\in S$ or $a=b=c$. In this paper we do not distinguish between these two concepts and use them interchangeably. 

A \textbf{pasch configuration} (or quadrilateral)  in  $\mathcal{P}_3(X)$ is a set of four blocks 
$$\{a,b,c\}, \{a,d,e\},\{f,b,d\},\{f,c,e\}$$
 such that all elements $a,b,c,d,e,f$ are distinct. An $STS(n)$ which does not contain a pasch configuration is called \textbf{anti-pasch} (or quadrilateral-free). 
It has been proved that for every admissible number $n\neq 7,13$ there exists an anti-pasch $STS(n)$, see \cite{AC}. 

The pasch configuration can be used to define an invariant of Steiner triple systems, namely the number of pasch configurations contained in them. So the class of anti-pasch Steiner triple systems is exactly the class for which this invariant is zero. 
One can in fact use the pasch configuration to define more invariants for Steiner triple systems which is the main focus of this paper.   \\

\end{section}
 
\begin{section}{Two invariants for Steiner triple systems }
One invariant of Steiner triple systems is the number of pasch configurations in them, see \cite{ST} for instance. Anti-pasch Steiner triple systems are those for which this number is zero. 
So we need other invariants to study anti-pasch Steiner triple systems. For example one can look for pasch configurations which have exactly three blocks in a given STS. This idea is elaborated in what follows.  

\begin{subsection}{Invariants and their properties}
Suppose that $S$ is an $STS(n)$. Associated to $S$, the sets $A(S)$  and $B(S)$ are defined as follows.   
The set $A(S)$ consists of all blocks $B$ for which $S\cup \{ B\}$ contains a pasch configuration whose set of  blocks contains $B$.  In terms of the associated quasigroup structure, it is easy to see that, we have 
$$A(S)=\{\{a\star b,b\star c,c\star a\}|a,b,c\in X\text{are distinct and } \{a,b,c\}\notin S\}.$$
 Inspired by this presentation, we set 
$$B(S)=\{\{a\star b,b\star c,c\star a\}| a,b,c\in X\text{ are distinct}\}.$$ 
Note that $\{a\star b,b\star c,c\star a\}=\{a,b,c\}$ iff $\{a,b,c\} \in S$. So we always have $B(S)=A(S)\cup S$.
Set $\alpha(S)$ and $\beta(S)$ to be the cardinals of $A(S)$ and $B(S)$  respectively. To save space, set $\gamma(S)=\beta(S)-\alpha(S)$.
These numbers are interesting invariants of an  $STS(n)$. 

In order to study these invariants, it is useful to define the following maps
$$\phi_S:\mathcal{P}_3(X)\setminus S\to A(S)$$
$$\phi_S(\{a,b,c\})=\{a\star b,b\star c,c\star a\}$$
and
$$\psi_S:\mathcal{P}_3(X)\to B(S)$$
$$\psi_S(\{a,b,c\})=\{a\star b,b\star c,c\star a\}.$$
Note that these maps are well-defined since if $a,b,c$ are distinct then so are $a\star b,b\star c,c\star a$. By the definitions of $A(S)$ and $B(S)$, these maps are onto.  Moreover 
 $\psi_S(\{a,b,c\})=\{a,b,c\}$ iff $\{a,b,c\} \in S$.
\begin{theorem}\label{inequality}
Suppose that $S$ is an $STS(n)$ with $n> 3$. Then we have 
$$\frac{1}{3}{n\choose 2}\leq\alpha(S)\leq\beta(S)\leq \alpha(S)+\frac{1}{3}{n\choose 2}\leq{n\choose 3}.$$
\end{theorem}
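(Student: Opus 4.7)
Three of the four inequalities in the chain are essentially formal and I would dispose of them first. The middle one, $\alpha(S) \leq \beta(S)$, is just $A(S) \subseteq B(S)$. The bound $\beta(S) \leq \alpha(S) + \frac{1}{3}\binom{n}{2}$ follows from $B(S) = A(S) \cup S$ together with the elementary count $|S| = \frac{1}{3}\binom{n}{2}$ (each block contains three pairs, each pair lies in a unique block). The rightmost inequality $\alpha(S) + \frac{1}{3}\binom{n}{2} \leq \binom{n}{3}$ is exactly the surjectivity of $\phi_S$, which yields $\alpha(S) \leq |\mathcal{P}_3(X) \setminus S| = \binom{n}{3} - |S|$.

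The substantive inequality is the leftmost one, $\frac{1}{3}\binom{n}{2} \leq \alpha(S)$, i.e.\ $|S| \leq |A(S)|$. The plan is to double count the fibers of $\phi_S$. Since $\binom{n}{3} - |S| = (n-3)|S|$ by a direct calculation, summing the identity $\binom{n}{3} - |S| = \sum_{B \in A(S)} |\phi_S^{-1}(B)|$ reduces the problem to the uniform fiber bound $|\phi_S^{-1}(B)| \leq n - 3$ for every $B \in A(S)$.

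To establish this bound for $B = \{x, y, z\}$, I would count triples $T \in \psi_S^{-1}(B)$ by conditioning on a distinguished element $a \in T$ and summing over $a \in X$, each triple being counted three times. For $a \notin B$ the three elements $u = a \star x$, $v = a \star y$, $w = a \star z$ are distinct, and quasigroup cancellation forces any $b, c$ in such a triple to satisfy $\{b, c\} \subseteq \{u, v, w\}$; so at most three triples through $a$ contribute. For $a \in B$, say $a = x$, the analogous reasoning pins the triple down to $\{x,\, x \star y,\, x \star z\}$, and a short calculation shows this triple actually lies in $\psi_S^{-1}(B)$ iff $(x \star y) \star (x \star z) = x$, iff $\{x, y, z\} \in S$. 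Totaling, $3|\psi_S^{-1}(B)| \leq 3(n-3) + 3 \cdot \mathbf{1}_{B \in S}$, so $|\psi_S^{-1}(B)| \leq n - 2$ when $B \in S$ and $\leq n - 3$ otherwise; in the former case the tautological preimage $B \in S$ lies outside the domain of $\phi_S$ and must be discarded, yielding $|\phi_S^{-1}(B)| \leq n - 3$ in both cases.

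The step I expect to be the real work is the equivalence $(x \star y) \star (x \star z) = x \iff \{x, y, z\} \in S$: the forward direction follows because the hypothetical block $\{x \star y, x \star z, x\}$ and the known block $\{x, x \star y, y\}$ share the pair $\{x, x \star y\}$ and must therefore coincide, forcing $x \star z = y$ and hence $\{x, y, z\} \in S$. Without this sharpening the $a \in B$ contribution would cost an extra factor of three and defeat the leftmost inequality.
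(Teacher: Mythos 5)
Your proof is correct and follows essentially the same route as the paper: the three formal inequalities are disposed of identically, and the substantive bound $\frac{1}{3}\binom{n}{2}\leq\alpha(S)$ is obtained in both cases by showing that every fiber of $\phi_S$ has at most $n-3$ elements and then double counting over $\mathcal{P}_3(X)\setminus S$. Your execution of the fiber bound (counting incidences $(a,T)$ with multiplicity three and treating the distinguished element $a\in B$ separately, where the unique candidate triple survives only when $B\in S$ and is then discarded as lying outside the domain of $\phi_S$) is a slightly more careful variant of the paper's ``fix $a$, then $b,c$ are determined, and $a$ must avoid three values'' argument, and in particular it handles the unordered-triple bookkeeping that the paper glosses over.
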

\begin{proof}
First we prove that $\frac{1}{3}{n\choose 2}\leq\alpha(S)$. Consider the map 
$\phi_S$.
I claim that the preimage of each block in $A(S)$ has at most $n-3$ elements. Suppose that $\{x,y,z\}\in A(S)$. The set $\phi_S^{-1}(\{x,y,z\})$ consists of blocks $\{a,b,c\}$ such that 
$$x=a\star b,y=b\star c,z=c\star a.$$
If $a$ is fixed then $b$ and $c$ are uniquely determined as $b=y\star a$ and $c=z\star a$. Moreover it is easy to show that $a$ cannot be equal to one of the elements $x,y,x\star y$. So there are at most $n-3$ choices for $a$ and hence $\phi_S^{-1}(\{x,y,z\})$
has at most $n-3$ elements. Since $\mathcal{P}_3(X)\setminus S$ has ${n\choose 3}-\frac{1}{3}{n\choose 2}=\frac{1}{3}{n\choose 2}(n-3) $ elements, we conclude that 
$A(S)$ has at least $\frac{1}{3}{n\choose 2}$ elements, i.e. $\frac{1}{3}{n\choose 2}\leq\alpha(S)$.

\noindent From  $B(S)=A(S)\cup S$, we  immediately obtain  $\alpha(S)\leq\beta(S)\leq \alpha(S)+\frac{1}{3}{n\choose 2}$.

\noindent Finally, since the map $\phi_S$ is onto, we see that $A(S)$ has at most as many elements as $\mathcal{P}_3(X)\setminus S$, i.e. ${n\choose 3}-\frac{1}{3}{n\choose 2}$.
This implies that $\alpha(S)+\frac{1}{3}{n\choose 2}\leq{n\choose 3}.$

\end{proof}

\end{subsection}

\begin{subsection}{Some classes of Steiner triple systems}
 It is interesting to characterize those STS's for which one of the inequalities in Theorem \ref{inequality} becomes an equality. 
This leads to some new and old classes of Steiner triple systems. First we handle those cases which lead to known classes of Steiner triple systems.

If the first inequality happens to be an equality  then we have a familiar class, namely the class of projective geometries. More precisely we have the following.
\begin{proposition}\label{projective}
Suppose that $S$ is an $STS(n)$ with $n>3$. Then the following statements are equivalent: (1) $\beta(S)=\frac{1}{3}{n\choose 2}$,  (2) $\alpha(S)=\frac{1}{3}{n\choose 2}$ and (3) $S$ is isomorphic to a projective geometry $PG(k,2)$ for some $k\geq 2$.
\end{proposition}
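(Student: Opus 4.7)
The plan is to establish the cycle of implications $(3) \Rightarrow (1) \Rightarrow (2) \Rightarrow (3)$. For $(3) \Rightarrow (1)$, identify $X$ with $\mathbb{F}_2^{k+1}\setminus\{0\}$ so that $a\star b=a+b$ for distinct $a,b$; then the three elements $a\star b$, $b\star c$, $c\star a$ sum to $0$ for any distinct $a,b,c$ and therefore form a block, so $\psi_S$ takes values in $S$, giving $B(S)=S$ and $\beta(S)=\frac{1}{3}\binom{n}{2}$. The implication $(1)\Rightarrow(2)$ is immediate from Theorem~\ref{inequality}, since $\frac{1}{3}\binom{n}{2}\leq\alpha(S)\leq\beta(S)$.

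The content is in $(2)\Rightarrow(3)$. Revisiting the proof of Theorem~\ref{inequality}, the bound $\alpha(S)\geq \frac{1}{3}\binom{n}{2}$ is tight precisely when every fiber $\phi_S^{-1}(\{x,y,z\})$ attains the maximum size $n-3$. Parameterizing the fiber via the matching $x=a\star b$, $y=b\star c$, $z=c\star a$ and the choice of $a$, equality forces the consistency equation $(x\star a)\star(z\star a)=y$ for every $a\in X\setminus\{x,z,x\star z\}$. Equivalently, for each pair $x\neq z$ in $X$, the value $(x\star a)\star(z\star a)$ depends only on $(x,z)$.

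Exploiting this constancy at the three valid arguments $a$, $x\star a$, and $z\star a$, and simplifying via the Steiner identity $u\star(u\star w)=w$, one obtains two expressions for the common value; comparing them and using the cancellation law $u\star w=v\star w\Rightarrow u=v$ gives the commutation identity
\[
x\star(z\star a)\;=\;z\star(x\star a).
\]
Now adjoin a formal point $0$ to $X$ and define $+$ on $Y=X\cup\{0\}$ by $0+y=y$, $a+a=0$, and $a+b=a\star b$ for distinct $a,b\in X$. Commutativity and the self-inverse law are built in, and associativity in the main case — three distinct elements forming a non-block triple — follows directly from the commutation identity; the block case reduces to $0=0$ and the degenerate cases are routine. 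Thus $(Y,+)$ is an elementary abelian $2$-group of order $2^{k+1}$ for some $k\geq 2$ (since $n>3$, i.e., $n\geq 7$), and the blocks of $S$ are precisely the $3$-subsets of $X$ summing to $0$, identifying $(X,S)$ with $PG(k,2)$.

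The main obstacle is extracting the pointwise commutation identity from the numerical fiber-size equality in $(2)\Rightarrow(3)$. Once this identity is in hand, the passage to an elementary abelian group structure is essentially the classical characterization of Boolean Steiner triple systems by associativity of the derived operation.
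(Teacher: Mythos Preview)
Your argument is correct, and for $(3)\Rightarrow(1)$ and $(1)\Rightarrow(2)$ it coincides with the paper's. The real difference is in $(2)\Rightarrow(3)$. The paper does not derive the commutation identity or build the group directly; instead it observes that when every fiber of $\phi_S$ has size exactly $n-3$, the three forbidden values of the parameter $a$ must absorb the further forbidden value, forcing $z=x\star y$ and hence $A(S)\subset S$. From $|A(S)|=|S|$ it gets $A(S)=S$, so every block lies in exactly $n-3$ pasch configurations; a double count then shows $S$ contains $\dfrac{n(n-1)(n-3)}{24}$ pasches, and the paper invokes the Stinson--Wei characterization of $PG(k,2)$ by this maximal pasch count to conclude.

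Your route is genuinely different and more self-contained: by evaluating the constant $(x\star a)\star(z\star a)$ at $a$, $x\star a$, $z\star a$ and cancelling, you extract the identity $x\star(z\star a)=z\star(x\star a)$, which is exactly what is needed to make the extended operation on $X\cup\{0\}$ associative. This avoids the external citation and yields the elementary abelian $2$-group structure (and hence $n+1=2^{k+1}$) directly. The paper's approach, by contrast, ties the equality case to the extremal pasch-count characterization, which is conceptually pleasant but outsources the hard step. One small point worth making explicit in your write-up: the constancy statement for \emph{all} pairs $x\neq z$ (not just those occurring in some $\{x,y,z\}\in A(S)$) follows because for $n>3$ any pair $x\neq z$ admits some valid $a$, and then $\{x,(x\star a)\star(z\star a),z\}\in A(S)$ automatically, so the fiber-size argument applies.
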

\begin{proof}
Clearly (1) implies (2). Now suppose that (2) holds. Consider the map $\phi_S$.
The proof of Theorem \ref{inequality}  reveals that in this case the preimage of each block in $A(S)$ has exactly $n-3$ elements. 
More precisely  
$$\phi_S^{-1}(\{x,y,z\}=\{\{a,x\star a,z\star a\}| a\in X\setminus\{x,y,x\star y\}\}.$$
It can be seen that $a$ cannot be $z$ as well. So $z=x\star y$. This implies that $A(S)=S$ and each block of $S$ is contained in $n-3$ pasch configurations in $S$. Now we count
 the number of pasch configurations in $S$. Consider the set $\Sigma$ of pairs $(B,P)$ where $B\in S$ and $P$ is a pasch configurations in $S$ containing  $B$. Counting the cardinal of $\Sigma$ in two ways shows that 
the number of pasch configurations in $S$ is $\frac{1}{4}({n\choose 3}-\frac{1}{3}{n\choose 2})=\frac{n(n-1)(n-3)}{24}$. This implies that $S$ is isomorphic to a projective geometry $PG(k,2)$ for some $k\geq 2$, see \cite{ST}.

Finally suppose that (3) holds. Then $ \phi(\{a,b,c\})=\{a+b,b+c,c+a\}$ which belongs to $S$ since $(a+b)+(b+c)+(c+a)=0$. This implies that $B(S)\subset S$. So $\beta(S)\leq \frac{1}{3}{n\choose 2}$. We also have 
$\beta(S)\geq \frac{1}{3}{n\choose 2}$ by Theorem \ref{inequality}. Therefore we have $\beta(S)=\frac{1}{3}{n\choose 2}$.

\end{proof}
Next we have the class of anti-pasch Steiner triple systems.  
\begin{proposition}\label{antipasch}
Suppose that $S$ is an $STS(n)$. Then $\beta(S)=\alpha(S)+\frac{1}{3}{n\choose 2}$ iff  $S$ is anti-pasch.
\end{proposition}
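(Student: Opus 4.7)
The plan is to use the relation $B(S)=A(S)\cup S$ that was established earlier, which immediately yields by inclusion-exclusion
$$\beta(S)=\alpha(S)+|S|-|A(S)\cap S|=\alpha(S)+\tfrac{1}{3}\tbinom{n}{2}-|A(S)\cap S|.$$
Thus the desired equality $\beta(S)=\alpha(S)+\tfrac{1}{3}\binom{n}{2}$ is equivalent to the set-theoretic condition $A(S)\cap S=\emptyset$. The whole proposition will then follow once this condition is shown to be equivalent to $S$ being anti-pasch.

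The remaining step is to unpack the definition of $A(S)$ for blocks $B$ that already belong to $S$. If $B\in S$, then $S\cup\{B\}=S$, so $B\in A(S)$ if and only if $S$ itself contains a pasch configuration one of whose blocks is $B$. Consequently
$$A(S)\cap S=\{B\in S:B\text{ lies in some pasch configuration contained in }S\}.$$
I would spell this out explicitly so that the reader sees that membership of $B$ in $A(S)\cap S$ is exactly the assertion that $S$ has a pasch configuration through $B$.

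From here the equivalence with the anti-pasch property is immediate in both directions. If $S$ is anti-pasch, then no block of $S$ sits in a pasch configuration of $S$, so $A(S)\cap S=\emptyset$. Conversely, if $S$ contains a pasch configuration, then any of its four blocks $B$ satisfies $B\in S$ and $B\in A(S)$, so $A(S)\cap S\neq\emptyset$. Combining this with the inclusion-exclusion identity above completes the proof.

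I do not expect any genuine obstacle; the statement is essentially a reformulation of the inclusion-exclusion identity $|A(S)\cup S|=|A(S)|+|S|-|A(S)\cap S|$ together with the observation that $A(S)\cap S$ detects pasch configurations inside $S$. The only thing to be careful about is to insist that the pasch configuration in the definition of $A(S)$ must have $B$ as one of its blocks, so that when $B\in S$ the condition genuinely forces a pasch configuration of $S$ through $B$ rather than merely somewhere in $S$.
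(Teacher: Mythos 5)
Your proposal is correct and follows essentially the same route as the paper: both reduce the equality $\beta(S)=\alpha(S)+\frac{1}{3}\binom{n}{2}$ to the condition $A(S)\cap S=\emptyset$ via the identity $B(S)=A(S)\cup S$, and then observe that this condition is exactly the anti-pasch property. You merely spell out more explicitly than the paper does that, for $B\in S$, membership in $A(S)$ means $S$ itself has a pasch configuration through $B$, which is a worthwhile clarification but not a different argument.
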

\begin{proof}
 Note that if $S$ is anti-pasch, then $B(S)$ is a disjoint union of $A(S)$ and $S$. Hence, $\beta(S)=\alpha(S)+\frac{1}{3}{n\choose 2}$. Conversely, if $\beta(S)=\alpha(S)+\frac{1}{3}{n\choose 2}$, then we must have 
$S\cap A(S)=\emptyset$ since $B(S)=A(S)\cup S$. This implies that $S$ is anti-pasch.
\end{proof}

Now we consider the new classes of Steiner triple systems.
\begin{definition}
A  Steiner triple system  $S$ is said to have enough pasch configurations if $\alpha(S)=\beta(S)$, i.e. $\gamma(S)=0$. 
\end{definition}
In other words a Steiner triple system  $S$  has enough pasch configurations iff every block of $S$ appears in at least one pasch configuration contained in $S$.

Another new class is given below. 
\begin{definition}
A  Steiner triple system $S$ of order $n$  is called  strongly anti-pasch if $\beta(S)={n\choose 3}$.  
\end{definition} 
The following two facts justify the terminology. 
\begin{proposition}\label{strongly}
Suppose that $S$ is an $STS(n)$. Then $\alpha(S)={n\choose 3}-\frac{1}{3}{n\choose 2}$ iff $\beta(S)={n\choose 3}$.
\end{proposition}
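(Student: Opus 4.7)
The plan is to prove both directions of the equivalence, using Theorem~\ref{inequality} and Proposition~\ref{antipasch} together with one combinatorial observation about the pasch configuration.

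The implication ($\Leftarrow$) is immediate from Theorem~\ref{inequality}. The chain $\beta\le\alpha+\frac{1}{3}\binom{n}{2}\le\binom{n}{3}$ pins both inequalities to equalities whenever $\beta(S)=\binom{n}{3}$, whence $\alpha(S)=\binom{n}{3}-\frac{1}{3}\binom{n}{2}$.

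For ($\Rightarrow$), the hypothesis $\alpha(S)=\binom{n}{3}-\frac{1}{3}\binom{n}{2}$ forces the surjection $\phi_S:\mathcal{P}_3(X)\setminus S\to A(S)$ to be a bijection, since its domain and codomain now have equal cardinality. The strategy is to show that $S$ must then be anti-pasch; once this is established, Proposition~\ref{antipasch} upgrades the middle inequality of Theorem~\ref{inequality} to an equality, giving $\beta(S)=\alpha(S)+\frac{1}{3}\binom{n}{2}=\binom{n}{3}$, as desired.

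To prove that bijectivity of $\phi_S$ implies $S$ is anti-pasch, I argue the contrapositive. Suppose $S$ contains a pasch configuration with blocks $\{a,b,c\}$, $\{a,d,e\}$, $\{f,b,d\}$, $\{f,c,e\}$, and consider the triples $\{a,f,c\}$ and $\{a,f,d\}$. Neither is a block of $S$: if $\{a,f,c\}\in S$, then the pair $\{a,c\}$ would lie in both $\{a,f,c\}$ and the pasch block $\{a,b,c\}$, contradicting the Steiner property, and the same argument rules out $\{a,f,d\}\in S$. Reading off the products directly from the four pasch blocks gives $f\star c=e$, $c\star a=b$, $f\star d=b$, $d\star a=e$, so
$$\phi_S(\{a,f,c\})=\{a\star f,\,e,\,b\}=\{a\star f,\,b,\,e\}=\phi_S(\{a,f,d\}),$$
contradicting injectivity of $\phi_S$. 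The main technical obstacle is spotting this colliding pair of triples; once it is identified, the contradiction reduces to four substitutions from the pasch blocks, and everything else is bookkeeping with Theorem~\ref{inequality} and Proposition~\ref{antipasch}.
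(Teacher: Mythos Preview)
Your proof is correct and follows essentially the same route as the paper's: both directions use Theorem~\ref{inequality} in the same way, and for $(\Rightarrow)$ you exhibit two distinct non-blocks with the same $\phi_S$-image whenever a pasch is present, just as the paper does (with a different labeling of the pasch). Your explicit verification that the two triples are not blocks of $S$ is a detail the paper leaves implicit.
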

\begin{proof}
From $ \beta(S)\leq \alpha(S)+\frac{1}{3}{n\choose 2}\leq{n\choose 3}$, we see that if $\beta(S)={n\choose 3}$  then  $\alpha(S)={n\choose 3}-\frac{1}{3}{n\choose 2}$. Conversely suppose that $\alpha(S)={n\choose 3}-\frac{1}{3}{n\choose 2}$. 
This implies that the map $\phi_S$ is one-to-one. In order to show that $\beta(S)={n\choose 3}$, we need to show that $A(S)\cap S=\emptyset$, i.e. $S$ is anti-pasch. Suppose on the contrary that $S$ contains a pasch, say  the pasch 
$\{a,b,x\},\{a,c,y\},\{b,y,z\},\{c,x,z\}$. Then we have $\phi_S(\{b,c,x\})=\phi_S(\{b,c,y\})=\{a,z,b\star c\}$, contradicting the fact that $\phi_S$ is one-to-one. 
\end{proof}
 \begin{corollary}
Every strongly anti-pasch STS is an anti-pasch STS. 
\end{corollary}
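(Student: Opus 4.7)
The plan is to chain together the two propositions just proved. Suppose $S$ is strongly anti-pasch, so $\beta(S) = \binom{n}{3}$ by definition. First I would invoke Proposition \ref{strongly} to conclude that $\alpha(S) = \binom{n}{3} - \frac{1}{3}\binom{n}{2}$. Subtracting then yields
\[
\beta(S) - \alpha(S) = \binom{n}{3} - \left(\binom{n}{3} - \tfrac{1}{3}\binom{n}{2}\right) = \tfrac{1}{3}\binom{n}{2},
\]
which is exactly the equality $\beta(S) = \alpha(S) + \frac{1}{3}\binom{n}{2}$ characterizing anti-pasch Steiner triple systems in Proposition \ref{antipasch}.

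There is really no obstacle here, since the argument inside the proof of Proposition \ref{strongly} already established injectivity of $\phi_S$ and used it to rule out a pasch configuration in $S$; the corollary just records this observation in packaged form. One could equivalently argue directly: if $S$ contained a pasch, then by Proposition \ref{antipasch} one would have $\beta(S) < \alpha(S) + \frac{1}{3}\binom{n}{2} \le \binom{n}{3}$, contradicting $\beta(S) = \binom{n}{3}$. Either route is a one-line deduction from the preceding results.
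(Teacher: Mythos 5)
Your proof is correct and is essentially identical to the paper's: both deduce $\alpha(S)=\binom{n}{3}-\frac{1}{3}\binom{n}{2}$ from Proposition \ref{strongly}, conclude $\beta(S)=\alpha(S)+\frac{1}{3}\binom{n}{2}$, and apply Proposition \ref{antipasch}. No issues.
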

\begin{proof}
If $S$ is strongly anti-pasch then  $\beta(S)={n\choose 3}$ and $\alpha(S)={n\choose 3}-\frac{1}{3}{n\choose 2}$ by Proposition \ref{strongly}. In particular we have $\beta(S)=\alpha(S)+\frac{1}{3}{n\choose 2}$. By proposition \ref{antipasch},
$S$ is anti-pasch.
\end{proof}

\end{subsection}

\begin{subsection}{Invariants of the direct product of Steiner triple systems}
In this part, we compute the invariants of  the direct product of Steiner triple systems in terms of theirs.
First we recall the definition. Suppose that $S$ and $T$ are two Steiner triple systems with foundations $X$ and $Y$. Their direct product $S\times T$ is defined as follows. The foundation of $S\times T$ is $X\times Y$ and
the quasigroup structure is given by $(a,x)\star (b,y)=(a\star b,x\star y)$.
\begin{theorem}\label{direct}
Suppose that $S$ and $T$ are two Steiner triple systems of orders $m$ and $n$ respectively. Then  we have
$$\alpha(S\times T)=6\alpha(S)\beta(T)+6\beta(S)\alpha(T)-6\alpha(S)\alpha(T)+6{n\choose 2}\beta(S)+$$
$$6{m\choose 2}\beta(T)+n\alpha(S)+m\alpha(T)+4{m\choose 2}{n\choose 2}$$
and
$$\beta(S\times T)=6\beta(S)\beta(T)+(6{n\choose 2}+n)\beta(S)+(6{m\choose 2}+m)\beta(T)+4{m\choose 2}{n\choose 2}.$$
We also have $\gamma(S\times T)=6\gamma(S)\gamma(T)+n\gamma(S)+m\gamma(T).$
\end{theorem}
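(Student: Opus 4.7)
The plan is to compute $\beta(S\times T)$ directly by a pattern analysis, then obtain $\gamma(S\times T)$ via a uniqueness-of-preimage argument, and finally derive $\alpha(S\times T) = \beta(S\times T) - \gamma(S\times T)$ by algebraic expansion. The main bookkeeping device is to classify any unordered triple $\{(a_i,x_i)\}_{i=1}^{3} \subset X\times Y$ by the pair $(r_X, r_Y)$, where $r_X = |\{a_1,a_2,a_3\}|$ and $r_Y = |\{x_1,x_2,x_3\}|$. Distinctness of the three elements forbids the patterns $(1,1)$, $(1,2)$, $(2,1)$, leaving six admissible types. Using the Steiner identity that two of $a\star b, b\star c, c\star a$ coincide iff two of $a,b,c$ coincide, one checks that the pattern is preserved under $\psi_{S\times T}$, so $B(S\times T)$ partitions into six pieces that can be counted separately.

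For $\beta(S\times T)$ I would enumerate images pattern by pattern. In pattern $(3,3)$ an image is specified by $(E, F, \sigma)$ with $E\in B(S)$, $F\in B(T)$ and $\sigma\colon E\to F$ a bijection: given any $U\in\psi_S^{-1}(E)$ and $V\in\psi_T^{-1}(F)$, the six bijections $\tau\colon U\to V$ induce, via the canonical identifications $\{u,u'\}\mapsto u\star u'$, the six bijections $E\to F$, producing an input realizing each $\sigma$. This contributes $6\beta(S)\beta(T)$ images. Patterns $(3,1)$ and $(1,3)$, parameterized by $(E,q)\in B(S)\times Y$ and $(p,F)\in X\times B(T)$ respectively, contribute $n\beta(S)$ and $m\beta(T)$. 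For pattern $(3,2)$ an image is specified by $(E, p_s, q_s, q_d)$ with $E\in B(S)$, $p_s\in E$ the first coordinate paired with the singleton second coordinate $q_s\in Y$, and $q_d\in Y\setminus\{q_s\}$ the doubled second coordinate, giving $3n(n-1)\beta(S)=6\binom{n}{2}\beta(S)$; symmetrically $(2,3)$ gives $6\binom{m}{2}\beta(T)$. Finally a $(2,2)$-image is a three-corner subset of some rectangle $\{p,p'\}\times\{q,q'\}$, and solving explicitly for the input parameters shows each such three-corner subset is realized, contributing $4\binom{m}{2}\binom{n}{2}$. Summing gives the stated formula for $\beta(S\times T)$.

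For $\gamma(S\times T)$ I would use that $\psi_{S\times T}(C)=C$ iff $C$ is a block of $S\times T$, so $B(S\times T)\setminus A(S\times T)$ equals the set of blocks $C$ whose only $\psi$-preimage is $C$ itself. Blocks come in three types: Type I with pattern $(3,3)$ (both projections are blocks of $S$ and $T$), Type II with pattern $(1,3)$, and Type III with pattern $(3,1)$. For a Type II block $C=\{(a,x),(a,y),(a,z)\}$ with $\{x,y,z\}\in T$, the preimages are in bijection with $\psi_T^{-1}(\{x,y,z\})$, so uniqueness holds iff $\{x,y,z\}\in T\setminus A(T)$; since $|T\setminus A(T)|=\gamma(T)$ and $a$ is arbitrary, this contributes $m\gamma(T)$, and symmetrically Type III contributes $n\gamma(S)$. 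For a Type I block with projection data $(E,F,\sigma)$ the preimage count factors as $|\psi_S^{-1}(E)|\cdot|\psi_T^{-1}(F)|$ by the pattern-$(3,3)$ analysis, so uniqueness holds iff $E\in S\setminus A(S)$ and $F\in T\setminus A(T)$, giving $6\gamma(S)\gamma(T)$. Thus $\gamma(S\times T)=6\gamma(S)\gamma(T)+n\gamma(S)+m\gamma(T)$, and $\alpha(S\times T)=\beta(S\times T)-\gamma(S\times T)$ expanded via $\gamma(S)=\beta(S)-\alpha(S)$ recovers the claimed formula for $\alpha$.

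The main technical hurdle is the bijection-counting underlying pattern $(3,3)$: one must show that for fixed $U\in\psi_S^{-1}(E)$ and $V\in\psi_T^{-1}(F)$, the natural map from bijections $U\to V$ to bijections $E\to F$, induced by the Steiner pairings on two-element subsets, is itself a bijection between two six-element sets. This rests on the observation that a bijection between 3-element sets is determined by its action on two-element subsets, which provides the inverse direction and underlies both the pattern-$(3,3)$ count for $\beta$ and the factorization of Type I preimage counts used for $\gamma$.
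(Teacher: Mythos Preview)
Your proof is correct and follows essentially the same approach as the paper: both arguments partition $B(S\times T)$ according to the six admissible patterns $(r_X,r_Y)=(|\{a_i\}|,|\{x_i\}|)$ and count each piece separately. The only organizational difference is that the paper computes $\alpha$ and $\beta$ directly type-by-type and then subtracts to get $\gamma$, whereas you compute $\beta$ type-by-type, obtain $\gamma$ by characterizing $(S\times T)\setminus A(S\times T)$ as the blocks with a unique $\psi$-preimage, and then recover $\alpha=\beta-\gamma$.
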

\begin{proof}
Both sets $A(S)$ and $B(S)$ can be partitioned as follows. Suppose that $\{(a,x),(b,y),(c,z)\}\in\mathcal{P}_3(X\times Y)$ is given.  we have the following cases.\\
\textbf{Type 1}: $|\{a,b,c\}|=|\{x,y,z\}|=3$. Then we have  $\{(a,x),(b,y),(c,z)\}\in A(X\times Y)$ iff $\{a,b,c\}\in A(S), \{x,y,z\}\in B(T)$ or $\{a,b,c\}\in B(S), \{x,y,z\}\in A(T)$. We also have  $\{(a,x),(b,y),(c,z)\}\in B(X\times Y)$ iff $\{a,b,c\}\in B(S)$ and  $\{x,y,z\}\in B(T)$.\\
\textbf{Type 2}:$|\{a,b,c\}|=3$ and $|\{x,y,z\}|=2$. We have  $\{(a,x),(b,y),(c,z)\}\in A(X\times Y)$ iff $\{(a,x),(b,y),(c,z)\}\in B(X\times Y)$ iff $\{a,b,c\}\in B(S)$. \\
\textbf{Type 3}:$|\{a,b,c\}|=2$ and $|\{x,y,z\}|=3$. We have  $\{(a,x),(b,y),(c,z)\}\in A(X\times Y)$ iff $\{(a,x),(b,y),(c,z)\}\in B(X\times Y)$ iff $\{x,y,z\}\in B(T)$. \\
\textbf{Type 4}: $|\{a,b,c\}|=3$ and $|\{x,y,z\}|=1$. We have  $\{(a,x),(b,y),(c,z)\}\in A(X\times Y)$ iff $\{a,b,c\}\in A(S)$. We also have $\{(a,x),(b,y),(c,z)\}\in B(X\times Y)$ iff $\{a,b,c\}\in B(S)$.\\
\textbf{Type 5}: $|\{a,b,c\}|=1$ and $|\{x,y,z\}|=3$. We have  $\{(a,x),(b,y),(c,z)\}\in A(X\times Y)$ iff $\{x,y,z\}\in A(T)$. We also have $\{(a,x),(b,y),(c,z)\}\in B(X\times Y)$ iff $\{x,y,z\}\in B(T)$.\\
\textbf{Type 6}: $|\{a,b,c\}|=2$ and $|\{x,y,z\}|=2$. We have  $\{(a,x),(b,y),(c,z)\}\in A(X\times Y)$ and  $\{(a,x),(b,y),(c,z)\}\in B(X\times Y)$ iff $\{x,y,z\}\in B(T)$.\\
It is straightforward to compute how many elements of each type exist. They are given below.\\
 \textbf{Type 1}:    There are $6\alpha(S)\beta(T)+6\beta(S)\alpha(T)-6\alpha(S)\alpha(T)$ elements of this type in $A(X\times Y)$.  There are $6\beta(S)\beta(T)$ elements of this type in $B(X\times Y)$.\\
\textbf{Type 2  and  3}:  There are $6{m\choose 2}\beta(T)+6{n\choose 2}\beta(S)$ elements of these types in $A(X\times Y)$.  We have the same number of elements of these types in $B(X\times Y)$.\\
 \textbf{Type 4 and 5}: There are $n\alpha(S)+m\alpha(T)$ elements of this type in $A(X\times Y)$. There are $n\beta(S)+m\beta(T)$ elements of this type in $B(X\times Y)$. \\
\textbf{Type 6}:  There are $4{m\choose 2}{n\choose 2}$ elements of this type in $A(X\times Y)$. We have the same number of elements of this type in $B(X\times Y)$.\\
 Adding up these numbers gives the formulas.

\end{proof}

The direct products in various classes of Steiner triple systems are discussed in the following proposition.
\begin{corollary}
Suppose that $S$ and $T$ are two STS's   of orders $m$ and $n$ respectively. Then we have the following.\\
(1) $S$ and $T$ are anti-pasch iff $S\times T$ is anti-pasch.\\
(2) $S$ and $T$ are strongly anti-pasch iff $S\times T$ is strongly anti-pasch.\\
(3) $S$ and $T$ have enough pasch configurations  iff $S\times T$ has enough pasch configurations.
\end{corollary}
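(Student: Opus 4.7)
The plan is to reduce each of (1), (2), (3) to the formulas in Theorem \ref{direct} combined with the invariant characterization of the class in question. Three characterizations will be used: $S$ has enough pasch configurations iff $\gamma(S)=0$ (by definition); $S$ is anti-pasch iff $\gamma(S)=\frac{1}{3}\binom{m}{2}$ (by Proposition \ref{antipasch}), which by Theorem \ref{inequality} is the largest value $\gamma$ can take; and $S$ is strongly anti-pasch iff $\beta(S)=\binom{m}{3}$ (by definition), again the largest permitted value of $\beta$. All three claims then become assertions that a certain polynomial in $\gamma(S),\gamma(T)$ or $\beta(S),\beta(T)$ attains its extremal value iff both inputs do.

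Part (3) is immediate from the third identity of Theorem \ref{direct}: since $\alpha(S)\leq\beta(S)$ for any $S$, each of the three summands in $\gamma(S\times T)=6\gamma(S)\gamma(T)+n\gamma(S)+m\gamma(T)$ is non-negative, so the sum vanishes iff all three do, and since $m,n>0$ this amounts to $\gamma(S)=\gamma(T)=0$.

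For the forward direction of (1) I would substitute the maximal values $\gamma(S)=\frac{m(m-1)}{6}$ and $\gamma(T)=\frac{n(n-1)}{6}$ into the $\gamma$-formula; the resulting expression factors as $\frac{mn}{6}\bigl[(m-1)(n-1)+(m-1)+(n-1)\bigr]=\frac{mn(mn-1)}{6}$, which is exactly the anti-pasch value for an STS of order $mn$. For the forward direction of (2) I would substitute $\beta(S)=\binom{m}{3}$ and $\beta(T)=\binom{n}{3}$ into the $\beta$-formula and recognize the resulting expression as the type-by-type enumeration of $\mathcal{P}_3(X\times Y)$ from the proof of Theorem \ref{direct}, hence equal to $\binom{mn}{3}$.

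The converse directions of (1) and (2) follow from strict monotonicity. The partial derivatives of the $\gamma$-formula with respect to $\gamma(S)$ and $\gamma(T)$ are $6\gamma(T)+n$ and $6\gamma(S)+m$, both strictly positive; so $\gamma(S\times T)$ is strictly increasing in each argument on its allowed rectangle and attains its upper bound $\frac{mn(mn-1)}{6}$ only at the corner where $\gamma(S)$ and $\gamma(T)$ are both maximal, forcing $S$ and $T$ to be anti-pasch. The same argument applied to the $\beta$-formula, whose partials $6\beta(T)+6\binom{n}{2}+n$ and $6\beta(S)+6\binom{m}{2}+m$ are again strictly positive, gives the converse of (2). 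The only real obstacle is the algebraic verification of the two forward substitutions, but both are short once one recognizes them as instances of the type decomposition from the proof of Theorem \ref{direct}.
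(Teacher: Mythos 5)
Your proof is correct and follows essentially the same route as the paper: substitute the extremal values of $\gamma$ (resp.\ $\beta$) into the product formulas of Theorem \ref{direct} for the forward directions, and use monotonicity of those formulas in each argument to force both factors to be extremal for the converses. The only cosmetic difference is that you verify the identity $\binom{mn}{3}=6\binom{m}{3}\binom{n}{3}+\cdots+4\binom{m}{2}\binom{n}{2}$ by recognizing it as the type decomposition of $\mathcal{P}_3(X\times Y)$ rather than by direct simplification, which is a slightly cleaner way to do the same check.
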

\begin{proof}
(1) Suppose that $S$ and $T$ are anti-pasch. Then we have $\gamma(S)=\frac{1}{3}{m\choose 2}$ and $\gamma(T)=\frac{1}{3}{n\choose 2}$. So
$$\gamma(S\times T)=6\gamma(S)\gamma(T)+n\gamma(S)+m\gamma(T) =\frac{2}{3}{m\choose 2}{n\choose 2}+\frac{1}{3}{m\choose 2}n+\frac{1}{3}{n\choose 2}m$$
which simplifies to $\frac{1}{3}{mn\choose 2}$. For another proof  see \cite{ST}.\\
Conversely, if $S\times T $ is anti-pasch then 
$$\frac{1}{3}{mn\choose 2}=\gamma(S\times T)=6\gamma(S)\gamma(T)+n\gamma(S)+m\gamma(T) \leq$$
$$\frac{2}{3}{m\choose 2}{n\choose 2}+\frac{1}{3}{m\choose 2}n+\frac{1}{3}{n\choose 2}m=\frac{1}{3}{mn\choose 2}. $$
This implies that $\gamma(S)=\frac{1}{3}{m\choose 2}$ and $\gamma(T)=\frac{1}{3}{n\choose 2}$, i.e. $S$ and $T$ are anti-pasch.\\
(2) Suppose that $S$ and $T$ are strongly anti-pasch. In order to show that $S\times T$ is strongly anti-pasch, we must show that $\beta(S\times T)={mn\choose 3}$. Using the second formula in  Theorem \ref{direct}, we need only prove that 
  $${mn\choose 3}=6{m\choose 3}{n\choose 3}+6{n\choose 2}{m\choose 3}+6{m\choose 2}{n\choose 3}+n{m\choose 3}+m{n\choose 3}+4{m\choose 2}{n\choose 2}$$
which can directly be proved by simplifying the left-hand side. \\
Conversely suppose that $S\times T$ is strongly anti-pasch. Then 
$${mn\choose 3}=\beta(S\times T)=$$
$$6\beta(S)\beta(T)+(6{n\choose 2}+n)\beta(S)+(6{m\choose 2}+m)\beta(T)+4{m\choose 2}{n\choose 2}\leq$$
$$6{m\choose 3}{n\choose 3}+6{n\choose 2}{m\choose 3}+6{m\choose 2}{n\choose 3}+n{m\choose 3}+m{n\choose 3}+4{m\choose 2}{n\choose 2}={mn\choose 3}.$$
Therefore we must have $\beta(S)={m\choose 3}$ and $\beta(T)={n\choose 3}$, i.e. $S$ and $T$ are strongly anti-pasch.\\
(3) This follows directly from the last formula in Theorem \ref{direct}.
\end{proof}

\end{subsection}

\end{section}

\begin{section}{Discussion on the new classes of Steiner triple systems}
The classes of projective geometries and anti-pasch Steiner triple systems have been known. So in this section only the other two classes are discussed. 
The main question is that for which admissible values of $n$ there is an $STS(n)$ in each of the classes. We discuss the two classes separately. \\
 
\begin{subsection}{Steiner triple systems with enough pasch configurations}
The simplest Steiner triple system with enough pasch configurations is the unique STS(7), i.e. the Fano plane. More generally any projective geometry  $PG(k,2)$ (with $k>2$)  has enough pasch configurations. 
Using the product of Steiner triple systems, we see that for any $n$ of the form $(2^{k_1}-1)\cdots (2^{k_m}-1)$ (where $k_1,...,k_m>2$), 
there is an $STS(n)$ which has enough pasch configurations. 

In order to construct more Steiner triple systems with enough pasch configurations, we recall a known construction of Steiner triple systems. Suppose that $q$ is a prime power such that  $q=6t+1$ where $t$ is a natural number. Let $x$ be a primitive element of 
the finite field $F_q$ and set $y=x^{2t}$. Suppose that $0\notin C\subset F_q$  has $t$ elements with the property  that if $x^i\neq x^j\in C$ then $i-j$ is not divisible by $t$. One can associate the following $STS(q)$ to $C$ 
$$S_C=\{c\{1,y,y^2\}+a|c\in C, a\in F_q\}.$$
Concerning pasch configurations in $S_C$, one has the following.
\begin{proposition}
An arbitrary block $B$ of $S_C$ appears in a pasch configuration contained in $S_C$ iff either $2B\in S_C$ or $B/2\in S_C$.
\end{proposition}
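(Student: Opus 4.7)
The plan is to use translation invariance of $S_C$ to reduce to a block of the form $B = c\{1,y,y^2\}$ with $c\in C$, and then to enumerate every pasch configuration of $S_C$ containing $B$. Writing such a pasch as $\{B, B_2, B_3, B_4\}$ with $B_2\ni c$, $B_3\ni cy$, $B_4\ni cy^2$, and letting $s\in B_2\cap B_3$, $t\in B_2\cap B_4$, $u\in B_3\cap B_4$ be the three new points, I would use the identity $\sum_{x\in B_i}x = 3a_i$ (where $a_i$ denotes the center of $B_i$) together with $c+cy+cy^2 = 0$. Solving the resulting linear system expresses $s, t, u$ as explicit affine functions of $c$ and of the three unknown centers $a_2, a_3, a_4$.

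The key algebraic tool is the criterion: a triple $\{\alpha,\beta,\gamma\}\subset F_q$ has the form $a'+c'\{1,y,y^2\}$ if and only if $(\alpha-\beta)^2+(\alpha-\gamma)^2+(\beta-\gamma)^2=0$. This follows directly from $y^2+y+1=0$, and is equivalent to saying that the ratio of two of these differences is a root of $z^2-z+1=0$, i.e.\ lies in $\{-y,-y^2\}$. Applying this criterion to each of $B_2, B_3, B_4$ forces one of two linear relations per block, so in total there are $2^3 = 8$ linear systems in the unknowns $(a_2, a_3, a_4)$ to be solved.

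I would then work through all eight systems. The expectation is that four of them produce degenerate configurations (either some $B_i$ collapses to $B$, or one of the putative new points $s, t, u$ coincides with one of $c, cy, cy^2$) and must be discarded. The remaining four correspond to honest pasches, and for each I would compute the scales $c_2, c_3, c_4$ of the auxiliary blocks; the calculation shows that in three of the four cases exactly one of $c_2, c_3, c_4$ equals $2c$ (the others equal $c$), while in the fourth case $c_2 = c_3 = c_4 = c/2$. Since a triple $a'+c'\{1,y,y^2\}$ lies in $S_C$ precisely when $c'\in C\langle y\rangle$, realizing one of these four pasches inside $S_C$ is equivalent to $2c\in C\langle y\rangle$ (first three cases) or $c/2\in C\langle y\rangle$ (fourth case), i.e.\ to $2B\in S_C$ or $B/2\in S_C$.

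The main obstacle is bookkeeping: running the eight-case linear algebra cleanly, verifying that the degenerate cases truly are degenerate, and checking that exactly the scales $2c$ and $c/2$ appear in the non-degenerate ones. Once the affine-triple criterion and the closed forms for $s, t, u$ are in hand, the rest is a mechanical (if lengthy) computation.
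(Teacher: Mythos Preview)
The paper does not actually prove this proposition; it simply refers the reader to Brouwer's 1977 preprint. So there is no in-paper argument to compare yours against, and your write-up would in fact \emph{add} content the paper lacks.

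Your plan is correct and, once the bookkeeping is done, yields a self-contained proof. A few confirmations and one small caveat. The translation reduction to $B=c\{1,y,y^2\}$ is valid since $S_C$ is closed under $p\mapsto p+a$. The criterion $(\alpha-\beta)^2+(\beta-\gamma)^2+(\gamma-\alpha)^2=0$ is equivalent (using $\mathrm{char}\,F_q\neq 2,3$, which holds because $q\equiv 1\pmod 6$) to $\{\alpha,\beta,\gamma\}=a'+c'\{1,y,y^2\}$ for some $a',c'$. Writing the three block constraints as $(s-c)/(t-c)=\epsilon_2$, $(s-cy)/(u-cy)=\epsilon_3$, $(t-cy^2)/(u-cy^2)=\epsilon_4$ with each $\epsilon_i\in\{-y,-y^2\}$, one finds that $\epsilon_3-\epsilon_2\epsilon_4\neq 0$ in every case, so each of the eight linear systems has a unique solution. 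The degenerate cases are precisely
\[
(\epsilon_2,\epsilon_3,\epsilon_4)\in\{(-y,-y,-y),\ (-y,-y^2,-y^2),\ (-y^2,-y,-y^2),\ (-y^2,-y^2,-y)\},
\]
and the remaining four give scales $(2c,c,c)$, $(c,2c,c)$, $(c,c,2c)$, and $(c/2,c/2,c/2)$, exactly as you predict. The only point to state carefully is that the scale of a block is intrinsically defined only modulo $\langle y\rangle$, so ``$c_i=2c$'' should be read as $c_i\in 2c\langle y\rangle$; this is precisely the condition $2B\in S_C$, so the conclusion is unaffected.
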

To see a proof, see Brouwer.\\
\begin{proposition}
Suppose that $q$ is not divisible by $31$. Then we can choose $C$ such that $S_C$ has enough pasch configurations.
\end{proposition}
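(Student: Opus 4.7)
The plan is to use the previous proposition to reduce ``$S_C$ has enough pasch configurations'' to a combinatorial condition on $C$, and then to exhibit a suitable $C$ via a graph-theoretic case analysis on the order of $\bar{2}$ in the quotient $F_q^\ast/H$.

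By the previous proposition, it suffices to choose $C$ so that every block $B\in S_C$ satisfies $2B\in S_C$ or $B/2\in S_C$. Since $B=cH+a$ forces $2B=(2c)H+2a$, the condition depends only on $c$ and requires that $2c$ or $c/2$ lie in $CH:=\bigcup_{c'\in C}c'H$. Passing to the cyclic quotient $G:=(F_q\setminus\{0\})/H$ of order $2t$ and writing $\bar{C}\subset G$ for the image of $C$, we need (i) $\bar{C}$ to be a transversal of the antipodal involution $\bar{c}\mapsto-\bar{c}$ on $G$ (which is equivalent to the condition in the paper that the elements of $C$ have pairwise distinct exponents modulo $t$, so in particular $|\bar{C}|=t$), and (ii) for every $\bar{c}\in\bar{C}$, at least one of $\bar{2}\bar{c},\bar{2}^{-1}\bar{c}$ lies in $\bar{C}$.

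To exhibit such a $\bar{C}$, consider the Cayley graph $\Gamma:=\mathrm{Cay}(G,\{\bar{2},\bar{2}^{-1}\})$, whose connected components are cycles of length $n:=\mathrm{ord}_G(\bar{2})$ corresponding to the cosets of $\langle\bar{2}\rangle$; condition (ii) asks that $\Gamma[\bar{C}]$ contain no isolated vertex. If $n$ is odd, then $-1\notin\langle\bar{2}\rangle$ (since $-1$ has order $2$), so the antipode pairs the $\langle\bar{2}\rangle$-cosets; I take $\bar{C}$ to be the union of one cycle from each such pair, making $\bar{C}$ a union of components of $\Gamma$, so (ii) holds trivially. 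If $n$ is even, then $-1\in\langle\bar{2}\rangle$ and acts within each cycle as the shift by $n/2$; in each cycle I take a consecutive arc of $n/2$ vertices, which is simultaneously an antipodal transversal within the cycle and induces a path in $\Gamma$, so no vertex is isolated. Taking the union over all cycles yields the desired $\bar{C}$.

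The main obstacle is ensuring the case analysis actually covers every $q$ admitted by the hypothesis: one must verify that the arithmetic of $G$ (the interplay of $\bar{2}$ and the antipode) always permits the above constructions, and the condition $31\nmid q$ is precisely the arithmetic restriction that rules out the degenerate configuration that would otherwise obstruct the construction. Once $\bar{C}$ is chosen, one lifts it back to a concrete $C\subset F_q\setminus\{0\}$ by picking an arbitrary preimage from each of the $t$ selected $H$-cosets; this $C$ satisfies the hypotheses of the construction, and the previous proposition then guarantees that $S_C$ has enough pasch configurations.
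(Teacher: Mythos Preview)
Your reduction to the quotient $G=F_q^\ast/H$ of order $2t$ is correct: the requirement on $C$ translates to $\bar C\subset G$ being a transversal for $g\mapsto -g$ with no $\bar c\in\bar C$ isolated in $\mathrm{Cay}(G,\{\bar 2,\bar 2^{-1}\})$. The two-case construction (a union of whole $\langle\bar 2\rangle$-cosets when $n=\mathrm{ord}_G(\bar 2)$ is odd; a half-arc in each cycle when $n$ is even) is a genuinely different route from the paper, which instead builds $C$ greedily, starting from $C_1=\{1,2\}$ and at each step adjoining either a pair $\{x^j,2x^j\}$ or a single shifted element $x^{j-mt}$.

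There is, however, a real gap. In the even case your half-arc has $n/2$ vertices, and the induced path avoids isolated vertices only when $n\geq 4$; you never rule out $n=2$. More seriously, the sentence ``the condition $31\nmid q$ is precisely the arithmetic restriction that rules out the degenerate configuration'' is the \emph{only} place you invoke the hypothesis, and it has no content: you neither name the configuration nor derive anything from $31\nmid q$. In fact your construction does not need that hypothesis at all. One checks that $n=2$ would force $4\in H$ but $2\notin H$, hence $4^3=1$ while $2^3\neq 1$ in $F_q$, so $\mathrm{char}(F_q)=3$, contradicting $q\equiv 1\pmod 6$. With that edge case closed, your Cayley-graph argument actually proves the proposition for \emph{every} prime power $q\equiv 1\pmod 6$, which is stronger than stated; by contrast, the paper's greedy seed $C_1=\{1,2\}$ genuinely needs $1$ and $2$ to lie in distinct $\langle x^t\rangle$-cosets, and that is where its arithmetic hypothesis on $q$ enters.
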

\begin{proof}
We need to show that $C$ can be chosen such that for every block $B\in S_C$ we have either $2B\in S_C$ or $B/2\in S_C$. So it is enough to find a choice of  $C$ such that for every $c\in C$ we have either $2c\in C$ or $c/2\in C$. We have $2=x^s$ for some natural number $s\leq 6t$. 
Note that $s$ cannot be divisible by $t$ because otherwise $2^6=1$ in $F_q$, i.e. $q$ is divisible by $31$. An appropriate $C$ can be constructed recursively as follows. Start with the set $C_1=\{1,x^s\}$. This set has the following two properties: (1) if $x^i\neq x^j\in C$ then $i-j$ is not divisible by $t$, and 
(2)  for every $c\in C_1$ we have either $2c\in C_1$ or $c/2\in C_1$. Suppose that non-equal sets $C_1\subset C_2\subset\dots\subset C_i$ are constructed such that they satisfy the above two properties. If for every integer $j$ there is some $x^r\in C_i$ such that $j-r$ is divisible by $t$ then $C_i$ is the required subset and we are done. So let $j$ be an integer such that there does not exist some $x^r\in C_i$ such that $j-r$ is divisible by $t$. If there does not exist some $x^r\in C_i$ such that $j+s-r$ is divisible by $t$, then $C_{i+1}=C_i\cup\{x^j,2x^j\}$ satisfies the two properties and is bigger that $C_i$. So let  
there be some $x^r\in C_i$ such that $j+s-r=mt$ for some integer $m$. Then $C_{i+1}=C_i\cup\{x^{j-mt}\}$ satisfies the two properties and is bigger that $C_i$. This process is clearly finite and after at most $t-1$ steps, we end up with the desired subset $C$ which proves the proposition.

\end{proof}
Combining the above results one obtains the following.
\begin{theorem}
If $n$ is a product of any number of the following natural numbers,  then there is an STS(n) with  enough pasch configurations: (1) primes of the form $6t+1$  (where $t$ is a natural number), 
(2) squares of primes of the $6t-1$  (where $t$ is a natural number),
(3) $2^k-1$ (where $k>2$ is a natural number).
\end{theorem}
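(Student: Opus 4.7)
The plan is to combine the direct-product closure (part (3) of the previous corollary) with the two constructions already established in this subsection: the difference-set construction $S_C$ for prime powers $q\equiv 1\pmod 6$ not divisible by $31$, and the projective-geometry construction for orders of the form $2^k-1$. By part (3) of the corollary, the class of Steiner triple systems with enough pasch configurations is closed under direct products, so it suffices to exhibit, for each individual $n_i$ appearing in the three families (1)--(3), some $\mathrm{STS}(n_i)$ with enough pasch configurations.

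Family (3) is immediate: for $n=2^k-1$ with $k>2$ the associated projective geometry over $\mathbb{F}_2$ is already known (from the opening remarks of this subsection) to have enough pasch configurations. Family (1) is handled by applying the previous proposition with $q=p$; the only restriction there is that $q$ is not divisible by $31$, and since $p$ is prime the sole excluded prime is $p=31$. Fortunately $31=2^5-1$ lies in family (3) as well, so this exception is absorbed by the projective-geometry construction.

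Family (2) is handled by a small arithmetic observation that lets us feed the previous proposition a prime-power argument. If $p\equiv -1\pmod 6$ is prime, then $p^2\equiv 1\pmod 6$, so $q=p^2$ is a prime power of the form $6t+1$ and is therefore a legitimate input to the construction of $S_C$. Moreover $p\equiv -1\pmod 6$ forces $p\neq 31$ (since $31\equiv 1\pmod 6$), so $q=p^2$ is not divisible by $31$ and the previous proposition applies, yielding an $\mathrm{STS}(p^2)$ with enough pasch configurations.

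Once these three single-factor constructions are in hand, the theorem follows by iterating the direct product. The main, and only, obstacle to watch is the $31$-divisibility restriction in the preceding proposition; the key point is that it collides with family (1) only at the single prime $p=31$, which is then recovered through its alternate description as $2^5-1$ in family (3). No other case analysis is needed.
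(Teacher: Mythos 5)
Your proposal is correct and follows essentially the same route as the paper: reduce to single factors via part (3) of the direct-product corollary, handle families (1) and (2) through the $S_C$ construction for prime powers $q\equiv 1\pmod 6$, handle family (3) via projective geometries, and absorb the exceptional prime $31$ into family (3). Your write-up is in fact more careful than the paper's one-line proof, which even contains the typo $31=2^6-1$ where you correctly write $31=2^5-1$.
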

\begin{proof}
Just note that the prime number $31=2^6-1$ for which the previous proposition cannot be used, is of the third form hence there is an STS(31) with enough pasch configurations.  
\end{proof}
 
 \end{subsection}

\begin{subsection}{Strongly anti-pasch Steiner triple systems}
The simplest strongly anti-pasch STS is the trivial STS(3). So $STS(3)\times STS(3)$  which is the unique $STS(9)$ is also strongly anti-pasch. More generally   there  is a strongly anti-pasch  $STS(3^m)$ for every natural number  $m$. 
We furthermore have the following.
\begin{proposition}
Every Hall triple system is strongly anti-pasch. 
\end{proposition}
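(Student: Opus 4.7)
The plan is to prove strong anti-pasch-ness by showing the surjective map $\psi_S:\mathcal{P}_3(X)\to B(S)$ is injective, which immediately gives $\beta(S)=\binom{n}{3}$. The single structural input I will use is the characterizing property of Hall triple systems: any three non-collinear points span a sub-STS isomorphic to the affine plane $AG(2,3)$ in which $\star$ is the restriction of the ambient operation. This reduces the whole problem to a short computation in $AG(k,3)$.

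First I would handle the affine case. Identifying $AG(k,3)$ with $\mathbb{F}_3^k$, the Steiner quasigroup is $a\star b=-a-b$. Writing $s=a+b+c$, one obtains
$$\psi_S(\{a,b,c\})=\{-a-b,\,-b-c,\,-c-a\}=\{c-s,\,a-s,\,b-s\}.$$
The sum of the three elements of this image equals $(a+b+c)-3s=s$, so $s$ is recoverable from the image, and therefore so is $\{a,b,c\}$; this proves injectivity of $\psi_S$ on $AG(k,3)$. A by-product of the same computation is that $\psi_S(\{a,b,c\})$ is a block iff $\{a,b,c\}$ is (namely, iff $s=0$).

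Now I would reduce the general Hall case to the affine one by localization. Suppose $\psi_S(\{a,b,c\})=\psi_S(\{a',b',c'\})=\{x,y,z\}$. If $\{a,b,c\}\notin S$, it is non-collinear and spans a subplane $\Pi\cong AG(2,3)$; since $\{x,y,z\}$ is built from $a,b,c$ under $\star$ it lies in $\Pi$, and the sum criterion inside $\Pi$ makes $\{x,y,z\}$ non-collinear, so $\{x,y,z\}$ also spans $\Pi$. By the symmetric argument $\{a',b',c'\}$ is non-collinear and spans $\Pi$, so $\{a',b',c'\}\subset\Pi$; the affine injectivity proved above, applied inside $\Pi$, forces $\{a,b,c\}=\{a',b',c'\}$. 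The remaining case $\{a,b,c\}\in S$ is dispatched the same way: $\{x,y,z\}$ is then a block, which by the sum criterion prevents $\{a',b',c'\}$ from being non-collinear, so $\{a',b',c'\}\in S$ and $\{a',b',c'\}=\{x,y,z\}=\{a,b,c\}$.

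The only delicate point throughout is the subplane-localization, i.e.\ the invocation of the Hall property that three non-collinear points span a sub-STS isomorphic to $AG(2,3)$ on which $\star$ restricts correctly; this is exactly the defining property of Hall triple systems (equivalent to distributivity of $\star$), after which the rest of the proof is just the short affine calculation above.
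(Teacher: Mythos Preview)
Your proof is correct, and the central idea---localize to the $AG(2,3)$ subplane guaranteed by the Hall property---is exactly the paper's. The execution differs, however. The paper argues directly that $B(S)=\mathcal{P}_3(X)$: given a non-block $\{a,b,c\}$, the subsystem it generates is the unique $STS(9)$, which is strongly anti-pasch, so $\{a,b,c\}\in A(S)$ already within that subsystem, hence in the ambient $A(S)$. This needs only one localization. Your route through injectivity of $\psi_S$ is logically equivalent but requires localizing \emph{two} preimages $\{a,b,c\}$ and $\{a',b',c'\}$ and then arguing that both land in the same subplane (via the common image $\{x,y,z\}$ and the sum criterion showing it is non-collinear, hence generates that subplane). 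This extra step is handled correctly, but it makes the argument longer than necessary. On the other hand, your explicit $\mathbb{F}_3$ computation of $\psi_S$ is a nice self-contained verification that $AG(2,3)$ is strongly anti-pasch, whereas the paper simply quotes this fact from the discussion of $STS(3)\times STS(3)$.
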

\begin{proof}
Suppose that $S$ is a Hall triple system. In order to show that $S$ is strongly anti-pasch, we need to show that any block not in $S$  belongs to $A(S)$. Suppose that $\{a,b,c\}\notin S$. Since $S$ is a Hall triple system, the subsystem of $S$ generated by elements $a,b,c$ is isomorphic to the 
 unique STS of order 9. Since the unique STS of order 9 is strongly anti-pasch, this implies that adding $\{a,b,c\}$ to $S$ produces a pasch configuration containing $\{a,b,c\}$, i.e. $\{a,b,c\}\in A(S)$.
\end{proof} 

\begin{remark}
(1) Note that in the same way one can prove that if every 3 points of an STS generate a strongly anti-pasch Steiner triple system then the Steiner triple system is itself strongly anti-pasch.\\
(2) It is known that the order of any Hall triple system is a power of 3. So this proposition does not give a new order for which there is a strongly anti-pasch Steiner triple system of that order. 
\end{remark}

 \end{subsection}

\begin{subsection}{Related problems}
There are some questions to be answered and some possible problems to work on. The most important questions are
(1)  for what admissible values of $n$  there exists an $STS(n)$ with enough pasch configurations and 
2) for what admissible values of $n$  there exists a strongly anti-pasch  $STS(n)$? It seems that the answers to 1) and 2) are all admissible values  $n$ except finitely many values, and all perfect powers of $3$, respectively. 

As for the invariants defined in this paper, the main problem is finding the spectrum of these values. The inequalities in \ref{inequality} give a restriction on these invariants. 
However not all values satisfying these inequalities can occur as invariants of some $STS$. For example
it is easy to show that  $\gamma(S)\neq\frac{1}{3}{n\choose 2}-1$  for every Steiner triple system of order $n$.

One can use other configurations, e.g. the Mitre configuration, to define new  invariants for Steiner triple systems as done in this paper. 
It is interesting to see what classes of Steiner triple systems are obtained in this way and how these invariants behave.

\end{subsection}

\textbf{Acknowledgments:} I would like to thank G.B. Khosrovshahi for introducing me to the subject. I would also thank him and E. Ghorbani 
for various exciting and stimulating discussions on the concept of anti-pasch  which greatly inspired me to write this paper.

\end{section}

\end{document}